\newtheorem{thm}{Theorem}
\title{On a discrete composition of the fractional integral and Caputo derivative\footnote{This is an accepted version of the manuscript published in \textit{Communications in Nonlinear Science and Numerical Simulations} \textbf{108} (2022), 106234 with DOI: \url{https://doi.org/10.1016/j.cnsns.2021.106234}}}
\author{\L ukasz P\l ociniczak\thanks{Faculty of Pure and Applied Mathematics, Wroc{\l}aw University of Science and Technology, Wyb. Wyspia{\'n}skiego 27, 50-370 Wroc{\l}aw, Poland}$\;^,$\footnote{Email: lukasz.plociniczak@pwr.edu.pl}}
\date{}
\begin{document}
\maketitle

\begin{abstract}
	We prove a discrete analogue for the composition of the fractional integral and Caputo derivative. This result is relevant in numerical analysis of fractional PDEs when one discretizes the Caputo derivative with the so-called L1 scheme. The proof is based on asymptotic evaluation of the discrete sums with the use of the Euler-Maclaurin summation formula. \\
	
	\noindent\textbf{Keywords}: fractional integral, Caputo derivative, Euler-Maclaurin formula\\
	
	\noindent\textbf{AMS Classification}: 26A33, 34A08, 65R20
\end{abstract}
	
\section{Introduction}
Let $I^\alpha$ be the fractional integral operator of order $\alpha\in (0,1)$, i.e. for any locally integrable function $y: (0,T) \mapsto \mathbb{R}$ we define
\begin{equation}
	I^\alpha y(t) = \frac{1}{\Gamma(\alpha)} \int_0^t (t-s)^{\alpha-1} y(s) ds. 
\end{equation}
Further, the Caputo derivative is defined on smooth functions by
\begin{equation}
	D^\alpha y(t) = I^{1-\alpha} y'(t) = \frac{1}{\Gamma(1-\alpha)} \int_0^t (t-s)^{-\alpha} y'(s)ds, \quad 0<\alpha<1.
\end{equation}
An elementary result (see for ex. \cite{Li19}) states that the composition $I^\alpha D^\alpha$ is given by
\begin{equation}
\label{eqn:CompositionC}
	I^\alpha D^\alpha y(t) = I^\alpha I^{1-\alpha} y'(t) = I^1 y'(t) = y(t) - y(0),
\end{equation}
which is a straightforward generalization of the fundamental theorem of calculus. In what follows we are interested in finding an analogue of the above relation when we allow for the time variable to take only a discrete number of possible values. This is especially relevant in numerical analysis where one constructs various schemes for approximately solving differential equations with fractional derivatives. For example, finite difference or finite element methods lead to a nonlocal recurrence relations that may be inverted by the use of the fractional integral or the fractional version of the discrete Gr\"onwall's lemma \cite{Lia19, Fer12, Plo17, Plo19}. Some results related to the same family as ours can be found in \cite{Kop20} where authors consider the stability of the L1 scheme on graded meshes. In particular, they give estimates for the case with power function on the right-hand side of (\ref{eqn:CompositionC}) in the discrete context. For more information concerning fractional calculus and numerical methods we refer the reader to \cite{Die02, Li19, Li19a}.

\section{Main result}
In numerical analysis, some very common discretizations of the above operators are constructed by simple quadratures. Fix a time step $h$ and define the mesh
\begin{equation}
	t_n = n h, 
\end{equation}
where $h>0$ is the time step. For brevity we denote $y_n := y(t_n)$. Some very useful discretizations of $I^\alpha$ and $D^\alpha$ can be constructed by a simple rectangle quadrature applied to defining integrals. In particular, we have
\begin{equation}
	I^\alpha y_n = J^{\alpha} y_n + Q_n, \quad D^\alpha = \delta^\alpha y_n + R_n,
\end{equation}
where the discretizations $J^\alpha$ and $\delta^\alpha$ are defined by
\begin{equation}
\label{eqn:Discretizations}
	J^\alpha y_n = \frac{h^\alpha}{\Gamma(1+\alpha)} \sum_{i=0}^{n-1} b_{n-i}(\alpha) y_{i+1}, \quad \delta^\alpha y_n = \frac{h^{-\alpha}}{\Gamma(2-\alpha)} \sum_{i=0}^{n-1} b_{n-i}(1-\alpha) (y_{i+1}-y_i),
\end{equation}
with weights
\begin{equation}
\label{eqn:Weights}
	b_j(\beta) = j^\beta - (j-1)^\beta. 
\end{equation}
Furthermore, the remainders satisfy
\begin{equation}
	Q_n \sim -\frac{h}{\Gamma(1+\alpha)} \left(\frac{1}{2} t^\alpha + \frac{\zeta(-\alpha)}{n^\alpha} + \frac{\alpha}{12n}\right)t^\alpha  y'(\tau), \quad R_n \sim -\frac{h^{2-\alpha}}{\Gamma(2-\alpha)}\zeta(\alpha-1)y''(\tau), \quad n\rightarrow\infty, \, n h \rightarrow t, 
\end{equation}
where $\zeta$ is Riemann-Zeta function and $\tau\in (0,t)$ is some number. The bounds above are sharp (see \cite{Plo21a}). In the literature this discretization of the fractional derivative is called the L1 scheme (see \cite{Old74}). 

We are interested in discrete version of the composition formula (\ref{eqn:CompositionC}), that is we expect that $J^\alpha \delta^\alpha y_n = y_n - y_0 + r_n$, with some remainder $r_n$. In order to prove this result and find the form of $r_n$ we need to recall the Euler-Maclaurin formula written in the form that we need (for a proof see for ex. \cite{Lam01}).

\begin{thm}[Euler-Maclaurin]
For $f\in C([0,m])$ we have
\begin{equation}
\label{eqn:EM}
	\sum_{k=1}^m f(k) = \int_1^m f(x) dx + \frac{1}{2}\left(f(1)+f(m)\right) + \int_1^m f'(x) P_1(x) dx,
\end{equation}
where $P_1(x) = B_1(x-\lfloor x\rfloor)$ is the periodized Bernoulli polynomial $B_1(x) = x - \frac{1}{2}$. 
\end{thm}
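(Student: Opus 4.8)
The plan is to reduce the global summation formula to a purely local identity on each unit interval $[k,k+1]$ and then sum. First I would fix an integer $k$ and exploit that on the open interval $(k,k+1)$ we have $\lfloor x\rfloor = k$, so the periodized Bernoulli polynomial collapses to the ordinary polynomial $P_1(x) = x - k - \tfrac12$, which satisfies $P_1'(x) = 1$. This identity $P_1' \equiv 1$ on each subinterval is what lets me write $\int_k^{k+1} f(x)\,dx = \int_k^{k+1} f(x)\,P_1'(x)\,dx$ and then integrate by parts to move the derivative onto $f$.

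The key step is that integration by parts. Using the one-sided boundary values $P_1(k^+) = -\tfrac12$ and $P_1((k+1)^-) = \tfrac12$, it gives
\[
	\int_k^{k+1} f(x)\,dx = \tfrac12\bigl(f(k)+f(k+1)\bigr) - \int_k^{k+1} f'(x)\,P_1(x)\,dx,
\]
equivalently the local trapezoidal identity
\[
	\tfrac12\bigl(f(k)+f(k+1)\bigr) = \int_k^{k+1} f(x)\,dx + \int_k^{k+1} f'(x)\,P_1(x)\,dx.
\]
Next I would sum this over $k = 1,\dots,m-1$. The integrals combine by additivity into $\int_1^m f\,dx + \int_1^m f'P_1\,dx$, while the trapezoidal left-hand side telescopes: each interior node value $f(k)$ for $2\le k\le m-1$ is counted twice with weight $\tfrac12$, and the two endpoints $f(1),f(m)$ once, so $\sum_{k=1}^{m-1}\tfrac12\bigl(f(k)+f(k+1)\bigr) = \sum_{k=1}^m f(k) - \tfrac12\bigl(f(1)+f(m)\bigr)$. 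Rearranging the result immediately produces \eqref{eqn:EM}.

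The only delicate point, and the main obstacle, is the correct treatment of $P_1$ at the integer endpoints, where it jumps: one must consistently use the right limit $-\tfrac12$ at $k$ and the left limit $\tfrac12$ at $k+1$, since $P_1$ is built from the fractional part $x-\lfloor x\rfloor$. Getting these two signs right is precisely what manufactures the $\tfrac12\bigl(f(1)+f(m)\bigr)$ boundary term. (One also needs $f\in C^1([0,m])$ rather than merely $f\in C([0,m])$ for the integral against $f'$ to be meaningful; I read the hypothesis in that stronger sense.) Everything else — the per-interval integration by parts and the telescoping of the trapezoidal contributions — is entirely routine.
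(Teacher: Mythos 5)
Your proof is correct. Note that the paper does not actually prove this theorem at all --- it states it as a known result and defers to the literature (the reference cited before the statement), so your argument is a genuine addition rather than a variant of something in the text. What you give is the standard self-contained derivation: on each open interval $(k,k+1)$ the periodized polynomial reduces to $x-k-\tfrac{1}{2}$ with derivative $1$, integration by parts with the one-sided limits $P_1(k^+)=-\tfrac{1}{2}$ and $P_1((k+1)^-)=\tfrac{1}{2}$ yields the local trapezoidal identity, and summing over $k=1,\dots,m-1$ telescopes into (\ref{eqn:EM}); I verified the sign bookkeeping at the jumps and it is right --- this is exactly where the boundary term $\tfrac{1}{2}\left(f(1)+f(m)\right)$ comes from. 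Two of your side remarks are also well taken. First, the hypothesis $f\in C([0,m])$ as printed in the paper is indeed too weak: the formula contains $\int_1^m f'(x)P_1(x)\,dx$, so one needs $f\in C^1$ (at least on $[1,m]$), and reading the hypothesis in that stronger sense is the correct repair; this is harmless for the paper, since the function $f$ to which the theorem is applied in the main proof is smooth on the relevant range. Second, since the sum starts at $k=1$ and both integrals run over $[1,m]$, the interval $[0,1]$ plays no role, so regularity on $[1,m]$ suffices. The only stylistic caution is that the identity $\int_k^{k+1} f\,dx=\int_k^{k+1} f\,P_1'\,dx$ should be understood via the continuous extension of $P_1$ from the open interval (or almost-everywhere equality), but since the integrand is unaffected by the endpoint values this is immaterial.
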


We can now proceed to our main result. Notice that in the below the remainder is of order $h^{\min(\alpha,1-\alpha)}$, that is, the exponent is always not larger than $1/2$. This interesting fact comes from the singularity of kernels. When composed, the fractional integral and Caputo derivative produce a kernel that has two types of singularities at each endpoint of the integration interval. 
\begin{thm}
Let $\alpha\in(0,1)$ and $y\in C^1(0,T)$. For any fixed $t\in(0,T)$ with $n\rightarrow\infty$ and $h = t/n$ we have
\begin{equation}
\label{eqn:Composition}
	J^\alpha \delta^\alpha y_n = y_n - y_0 + r_n
\end{equation}
where the remainder $r_n$ satisfies
\begin{equation}
\label{eqn:Remainder}
	|r_n|\leq C h^\beta \int_0^t (t-s)^{-\beta} |y'(s)| ds, \quad \beta:= \min(\alpha,1-\alpha).
\end{equation} 
and the constant $C$ depends only on $\alpha$ and $y$. 
\end{thm}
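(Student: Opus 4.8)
The plan is to expand the composition into an explicit double sum, interchange the order of summation, and isolate a convolution of the two weight families. Writing out the definitions in (\ref{eqn:Discretizations}) and using $(\delta^\alpha y)_{i+1} = \frac{h^{-\alpha}}{\Gamma(2-\alpha)}\sum_{j=0}^{i} b_{i+1-j}(1-\alpha)(y_{j+1}-y_j)$, the powers of $h$ cancel and one obtains
\begin{equation*}
	J^\alpha \delta^\alpha y_n = \frac{1}{\Gamma(1+\alpha)\Gamma(2-\alpha)} \sum_{i=0}^{n-1}\sum_{j=0}^{i} b_{n-i}(\alpha)\, b_{i+1-j}(1-\alpha)(y_{j+1}-y_j).
\end{equation*}
Interchanging the summations (for fixed $j$ the index $i$ runs from $j$ to $n-1$) and substituting $k=i-j$ collapses the inner sum into a convolution that depends only on $N:=n-j$, namely
\begin{equation*}
	J^\alpha \delta^\alpha y_n = \frac{1}{\Gamma(1+\alpha)\Gamma(2-\alpha)} \sum_{j=0}^{n-1}(y_{j+1}-y_j)\, K_{n-j}, \qquad K_N := \sum_{k=0}^{N-1} b_{N-k}(\alpha)\, b_{k+1}(1-\alpha).
\end{equation*}

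The second step is to recognize the telescoping identity $y_n - y_0 = \sum_{j=0}^{n-1}(y_{j+1}-y_j)$, so that the composition formula (\ref{eqn:Composition}) holds with
\begin{equation*}
	r_n = \sum_{j=0}^{n-1}(y_{j+1}-y_j)\left(\frac{K_{n-j}}{\Gamma(1+\alpha)\Gamma(2-\alpha)}-1\right).
\end{equation*}
Everything then reduces to showing that the normalized kernel is close to $1$ at a quantitative rate. The guiding heuristic is that $K_N$ approximates the Beta integral $\alpha(1-\alpha)\int_0^N (N-x)^{\alpha-1} x^{-\alpha}\,dx = \Gamma(1+\alpha)\Gamma(2-\alpha)$, which is the discrete shadow of the continuous identity $I^\alpha I^{1-\alpha}=I^1$, and I expect
\begin{equation*}
	\left|\frac{K_N}{\Gamma(1+\alpha)\Gamma(2-\alpha)}-1\right| \leq C N^{-\beta}, \qquad \beta=\min(\alpha,1-\alpha).
\end{equation*}

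The main obstacle is establishing this estimate for $K_N$, and this is where the Euler-Maclaurin formula (\ref{eqn:EM}) enters. The factor $b_{k+1}(1-\alpha)\sim(1-\alpha)k^{-\alpha}$ decays with an integrable singularity, while $b_{N-k}(\alpha)\sim\alpha(N-k)^{\alpha-1}$ is singular near $k=N$; applying (\ref{eqn:EM}) to the continuous extension of the summand in $k$ produces a leading integral equal to the Beta integral above together with boundary and $P_1$-integral corrections. I would split the analysis into the two regions $k\approx 0$ and $k\approx N$, where the distinct singularities $x^{-\alpha}$ and $(N-x)^{\alpha-1}$ live, and show that they contribute errors of size $N^{-\alpha}$ and $N^{-(1-\alpha)}$ respectively. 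The slower-decaying of the two dominates, producing the exponent $\beta=\min(\alpha,1-\alpha)$, which is never better than $N^{-1/2}$ — exactly the two-sided singularity phenomenon flagged before the statement. The delicate point is bounding the remainder integral $\int P_1(x)f'(x)\,dx$ uniformly in $N$ near the endpoints, since there $f'$ blows up.

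Finally I would assemble the bound. By the mean value theorem, $|y_{j+1}-y_j|\le h\,|y'(\xi_j)|$ for some $\xi_j\in(t_j,t_{j+1})$, and since $h=t/n$ we have $(n-j)^{-\beta}=h^{\beta}(t-t_j)^{-\beta}$. Substituting the kernel estimate gives
\begin{equation*}
	|r_n| \leq C\, h^{\beta}\sum_{j=0}^{n-1} (t-t_j)^{-\beta}\, |y'(\xi_j)|\, h,
\end{equation*}
which is a Riemann sum for $C h^{\beta}\int_0^t (t-s)^{-\beta}|y'(s)|\,ds$. The weakly singular integral is finite because $\beta<1$, and a standard comparison for integrands with an integrable endpoint singularity yields (\ref{eqn:Remainder}) with a constant $C$ depending only on $\alpha$ and $y$.
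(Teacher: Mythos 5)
Your reduction is, up to indexing, exactly the paper's own: the double sum, the interchange of summation, the identification of the convolution kernel $K_N$ (which equals the paper's $S_m$ with $m=N$), the telescoping identity defining $r_n$, and the final mean-value-theorem/Riemann-sum assembly with $(n-j)^{-\beta}=h^\beta(t-t_j)^{-\beta}$ all coincide with the published argument, and those parts are correct. The genuine gap sits at precisely the point you yourself call ``the main obstacle'': the estimate $\left|K_N/\left(\Gamma(1+\alpha)\Gamma(2-\alpha)\right)-1\right|\leq CN^{-\beta}$ is never proved. You say you ``expect'' it, and that you ``would'' split into the regions $k\approx 0$ and $k\approx N$ and ``show'' contributions of size $N^{-\alpha}$ and $N^{-(1-\alpha)}$, but no such estimates are carried out. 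This inequality is the entire technical content of the theorem; everything else, both in your proposal and in the paper, is bookkeeping.

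Moreover, the gap is not one of routine detail, because the obvious way of executing your plan fails. After Euler--Maclaurin and the rescaling $x=Ny$, the leading term is $N^2\int_{1/N}^1\left(\left(1-y+\tfrac{1}{N}\right)^\alpha-(1-y)^\alpha\right)\left(y^{1-\alpha}-\left(y-\tfrac{1}{N}\right)^{1-\alpha}\right)dy$, and near $y=1$ you cannot simply Taylor-expand in $1/N$ and ``count powers'': the expansion of the first factor produces a term proportional to $(1-y)^{\alpha-2}$, which is not integrable at $y=1$, so the would-be error term is a divergent integral. The paper needs a specific device here: write $\left(1-y+\tfrac{1}{N}\right)^\alpha-(1-y)^\alpha=\alpha\int_0^{1/N}(1+z-y)^{\alpha-1}dz$, swap the order of integration by Tonelli's theorem, and substitute $y=(1+z)u$, which desingularizes the inner integral and yields the $O(N^{-\alpha})$ rate at that endpoint. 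The same device is needed a second time for the $P_1$-remainder integral, where $f'$ blows up at both endpoints --- you correctly flag this as ``the delicate point'' but leave it unresolved. Until these two estimates are actually established, the claimed $N^{-\beta}$ decay of the kernel error, and hence the bound (\ref{eqn:Remainder}), remains a conjecture rather than a proof.
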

\begin{proof}
Fix $n = 1, ..., N$ and start with writing the composition as
\begin{equation}
\begin{split}
	J^\alpha \delta^\alpha y_n
	&= \frac{h^\alpha}{\Gamma(1+\alpha)} \sum_{i=0}^{n-1} b_{n-i}(\alpha) \delta^\alpha y_{i+1}\\
	&=\frac{1}{\Gamma(1+\alpha)\Gamma(2-\alpha)} \sum_{i=0}^{n-1} b_{n-i}(\alpha) \sum_{j=0}^{i} b_{i-j+1}(1-\alpha) (y_{j+1}-y_j).
\end{split}
\end{equation}
Now, interchanging the order of summation we can write
\begin{equation}
\begin{split}
\label{eqn:CompositionS}
	J^\alpha \delta^\alpha y_n &=\frac{1}{\Gamma(1+\alpha)\Gamma(2-\alpha)}\sum_{j=0}^{n-1} \left(\sum_{i = j}^{n-1}  b_{n-i}(\alpha)b_{i-j+1}(1-\alpha) \right) (y_{j+1}-y_j) \\
	&= \frac{1}{\Gamma(1+\alpha)\Gamma(2-\alpha)}\sum_{j=0}^{n-1} \left(\sum_{k = 1}^{n-j}  b_{n-j-k+1}(\alpha)b_{k}(1-\alpha) \right) (y_{j+1}-y_j) 
\end{split}
\end{equation}
where in the second equality we have introduced a new summation variable $k = i-j+1$ for the sum in the parenthesis. Put $m = n - j$ and denote the sum in the parenthesis above
\begin{equation}
	S_m = \sum_{k = 1}^{m}  b_{m-k+1}(\alpha)b_{k}(1-\alpha).
\end{equation}
We would like to find the approximation of $S_m$ and to this end we use Euler-Maclaurin formula (\ref{eqn:EM}) by using (\ref{eqn:Weights}) and writing
\begin{equation}
\label{eqn:EMSum0}
\begin{split}
	S_m 
	&= \sum_{k = 1}^{m} \left((m-k+1)^\alpha-(m-k)^\alpha\right)\left(k^{1-\alpha}-(k-1)^{1-\alpha}\right) \\
	&= m \sum_{k = 1}^{m} \left(\left(1-\frac{k}{m}+\frac{1}{m}\right)^\alpha-\left(1-\frac{k}{m}\right)^\alpha\right)\left(\left(\frac{k}{m}\right)^{1-\alpha}-\left(\frac{k}{m}-\frac{1}{m}\right)^{1-\alpha}\right) \\
	&= m \sum_{k = 1}^{m} f(k),
\end{split}
\end{equation}
where we defined the function $f$. Now, using (\ref{eqn:EM}) we can write
\begin{equation}
\label{eqn:EMSum}
	S_m = m\int_1^m f(x) dx + \frac{m}{2}\left(f(1) + f(m)\right) + m\int_1^m f'(x) P_1(x) dx.
\end{equation}
We will estimate the above three components when $m$ is large. We quickly can see that the easiest part is
\begin{equation}
\label{eqn:Average}
\begin{split}
	\frac{m}{2}\left(f(1) + f(m)\right) 
	&= \frac{m}{2} \left( \left(\frac{1}{m}\right)^{1-\alpha}\left(1-\left(1-\frac{1}{m}\right)^\alpha\right) + \left(\frac{1}{m}\right)^\alpha \left(1-\left(1-\frac{1}{m}\right)^{1-\alpha}\right) \right)\\
	&= O\left(\frac{1}{m^{1-\alpha}}+\frac{1}{m^\alpha} \right), \quad m\rightarrow\infty,
\end{split}
\end{equation}
where the asymptotic behaviour follows from the Taylor expansion. Further, we turn to the analysis of the first term in (\ref{eqn:EMSum}) for which we change the integration variable $y = m x$
\begin{equation}
	m\int_1^m f(x) dx = m^2\int_\frac{1}{m}^1 \left(\left(1-y+\frac{1}{m}\right)^\alpha-\left(1-y\right)^\alpha\right)\left(y^{1-\alpha}-\left(y-\frac{1}{m}\right)^{1-\alpha}\right) dy.
\end{equation}
Note that the above is invariant under the transformation $\alpha \mapsto 1-\alpha$ what can be seen by a substitution $y = 1-x+1/m$. By inspection we can see that the integrand converges to $\alpha(1-\alpha) (1-y)^{\alpha-1}y^{-\alpha}$ as $m\rightarrow\infty$ (each expression in parenthesis converges to its derivative), hence, by the Lebesgue Monotone Convergence Theorem and the definition of Euler beta function we conclude that
\begin{equation}
	m\int_1^m f(x) dx \rightarrow \alpha(1-\alpha) \int_0^1(1-y)^{\alpha-1}y^{-\alpha} dy = \Gamma(1+\alpha)\Gamma(2-\alpha) \quad \text{as} \quad m\rightarrow\infty.
\end{equation}
This, together with (\ref{eqn:CompositionS}) proves that in this limit, the discrete composition verifies (\ref{eqn:CompositionC}). However, we would like to investigate the rate of this convergence to have a more useful formula. Due to singularity of the integrand, we cannot expand it into Taylor series for large $m$ and then integrate since such expansion would diverge. Since the behaviour of the integrand is different for each terminal: $t=0$, $t=1$ we split it into two terms
\begin{equation}
\label{eqn:MainPart}
	m\int_1^m f(x) dx = m^2 \left(\int_\frac{1}{m}^\frac{1}{2} + \int_\frac{1}{2}^1\right) = K_0+K_1.
\end{equation}
Because $1/m > 0$, the integrand in $K_0$ does not have any singularities we can safely expand for $1/m \rightarrow 0$
\begin{equation}
	K_0 = \alpha(1-\alpha)\int_\frac{1}{m}^\frac{1}{2} (1-y)^{\alpha-1}y^{-\alpha} dy + \frac{\alpha(1-\alpha)}{2m} \int_\frac{1}{m}^\frac{1}{2}(1-y)^{\alpha-2}y^{-\alpha-1} (\alpha-y) dy + O\left(\frac{1}{m^2}\right).
\end{equation} 
And the first integral above is
\begin{equation}
\begin{split}
	\int_\frac{1}{m}^\frac{1}{2} (1-y)^{\alpha-1}y^{-\alpha} dy &= \int_0^\frac{1}{2} (1-y)^{\alpha-1}y^{-\alpha} dy - \int_0^\frac{1}{m} (1-y)^{\alpha-1}y^{-\alpha} dy \\
	&= \int_0^\frac{1}{2} (1-y)^{\alpha-1}y^{-\alpha} dy - \frac{1}{1-\alpha} \frac{1}{m^{1-\alpha}} + O\left(\frac{1}{m^{2-\alpha}}\right),
\end{split}
\end{equation}
since the term $(1-y)^{\alpha-1} = O(1)$ as $m\rightarrow\infty$. By the same argument, the second integral in $K_0$ can be expanded as follows
\begin{equation}
	\int_\frac{1}{m}^\frac{1}{2}(1-y)^{\alpha-2}y^{-\alpha-1} (\alpha-y) dy = m^\alpha + O(1), \quad m\rightarrow\infty. 
\end{equation}
Therefore,
\begin{equation}
	K_0 = \alpha(1-\alpha) \int_0^\frac{1}{2} (1-y)^{\alpha-1}y^{-\alpha} dy + O\left(\frac{1}{m^{1-\alpha}}\right), \quad m\rightarrow\infty. 
\end{equation}
A similar analysis cannot be conducted for $K_1$ since we would arrive at a divergence. A roundabout can be constructed by observing that 
\begin{equation}
	\left(1-y+\frac{1}{m}\right)^{\alpha}-(1-y)^\alpha = \alpha \int_0^\frac{1}{m} (1-y+z)^{\alpha-1} dz.
\end{equation}
Then, by Tonelli's theorem and noting that $m(y^{1-\alpha} - (y-1/m)^{1-\alpha}) = (1-\alpha) y^{-\alpha} + O(1/m)$ we have
\begin{equation}
\begin{split}
	K_1 
	&= \alpha m \int_0^\frac{1}{m} \left(\int_\frac{1}{2}^1 (1+z-y)^{\alpha-1}  y^{-\alpha} dy\right) \left(1+O\left(\frac{1}{m}\right)\right)dz \\
	&= \alpha m \int_0^\frac{1}{m} \left(\int_\frac{1}{2(1+z)}^\frac{1}{1+z} (1-u)^{\alpha-1} u^{-\alpha} du\right) \left(1+O\left(\frac{1}{m}\right)\right)dz,
\end{split}
\end{equation}
where we have put $y = (1+z)u$ what removes the singularity from the integrand. The integral in parenthesis can now be expanded for $z\rightarrow 0^+$ (since $z\in (0,1/m))$ yielding the leading order
\begin{equation}
	K_1 = \alpha \int_\frac{1}{2}^1 (1-u)^{\alpha-1} u^{-\alpha} du + O\left(\frac{1}{m^\alpha}\right), \quad m\rightarrow\infty.
\end{equation}
Finally, we can go back to (\ref{eqn:MainPart}) to obtain
\begin{equation}
\label{eqn:fIntAsym}
	m \int_1^m f(x) dx = \Gamma(1+\alpha) \Gamma(2-\alpha) + O\left(\frac{1}{m^\alpha} + \frac{1}{m^{1-\alpha}}\right), \quad m\rightarrow\infty. 
\end{equation}
The next step is to proceed with the remainder in (\ref{eqn:EMSum}). Its analysis is similar to the above and we sketch only the most important details. By calculating derivatives we see that the remainder has almost the same form as before
\begin{equation}
\begin{split}
	m\int_1^m & f'(x) P_1(x) dx \\
	&= -\alpha m \int_\frac{1}{m}^1 \left(\left(1-y+\frac{1}{m}\right)^{\alpha-1}-\left(1-y\right)^{\alpha-1}\right)\left(y^{1-\alpha}-\left(y-\frac{1}{m}\right)^{1-\alpha}\right)P_1(my) dy \\
	& - (1-\alpha) m \int_\frac{1}{m}^1 \left(\left(1-y+\frac{1}{m}\right)^{\alpha}-\left(1-y\right)^{\alpha}\right)\left(y^{-\alpha}-\left(y-\frac{1}{m}\right)^{-\alpha}\right)P_1(my) dy.
\end{split}
\end{equation}
By counting powers and utilizing the fact that $P_1$ is bounded we can specify the correct convergence order of the above. For example, the first integral has two singularities for large $m$: $y = 0$ and $y=1$. In the former case we can expand in the Taylor series for $m\rightarrow\infty$ which will consume two powers of $m$ leaving $y^{-\alpha}$ singularity. After integration we obtain a term proportional to $m$ with an exponent $1-2+1-\alpha = -\alpha$. On the other hand, to deal with the singularity at $y=1$ we use the trick with Tonelli's theorem to obtain a $O(m^{-\alpha})$ term. The second integral above can be analysed in the same way with the difference that the singularity at $y = 1$ yields a $O(m^{-1})$ term (because the integrand after expansion in $m\rightarrow\infty$ is integrable there), while the one at $y = 0$ produces $O(m^{\alpha-1})$. We see that the remainder introduces terms of the same order, hence putting everything together, recalling that $m=n-j$, and returning to (\ref{eqn:CompositionS}) brings us to
\begin{equation}
	J^\alpha \delta^\alpha y_n = y_n-y_0 + \sum_{j=0}^{n-1} \frac{c_{n-j}}{(n-j)^\beta} (y_{j+1}-y_j), \quad \beta = \min(\alpha,1-\alpha),
\end{equation}
with $c_{n-j}$ bounded by, say, $C$. Hence, for $n\rightarrow\infty$ with $nh \rightarrow t \in (0,T)$ by the definition of Riemann integral we have
\begin{equation}
\begin{split}
	\left|\sum_{j=0}^{n-1} \frac{c_{n-j}}{(n-j)^\beta} (y_{j+1}-y_j)\right| &\leq C h \sum_{j=0}^{n-1} \frac{|y'(\xi_{j})|}{(n-j)^\beta} \\
	&= C \frac{n h}{n^{1+\beta}} \sum_{j=0}^{n-1} \left(1-\frac{j}{n}\right)^{-\beta}|y'(\xi_{j})| \sim C \frac{t}{n^\beta} \int_0^1 (1-x)^{-\beta} |y'(x t)| dx \\
	&\sim C h^\beta \int_0^t (t-s)^{-\beta} |y'(s)| ds.
\end{split}
\end{equation}
with $\xi_j$ being a intermediate point and the new integration variable $s = xt$. The proof is complete.
\end{proof}
We close the paper with several numerical verifications of the above theorem. First, it is interesting to see how the asymptotic relation for the Euler-Maclaurin's integral (\ref{eqn:fIntAsym}) behaves. In Fig. \ref{fig:fIntAsym} a loglog plot of
\begin{equation}
\label{eqn:fError}
	\left|m\int_1^m f(x) dx - \Gamma(1+\alpha)\Gamma(2-\alpha)\right|,
\end{equation}
is depicted for increasing $m$ with $f$ defined in (\ref{eqn:EMSum0}). Recall that the integrand is invariant under the transformation $\alpha \mapsto 1-\alpha$ and thus we consider only one exemplary case of $\alpha=0.75$. As can be seen, the integral approaches its limit with the correct rate. Numerical simulations with other values of $\alpha$ give very similar results.

\begin{figure}
	\centering
	\includegraphics[scale = 1]{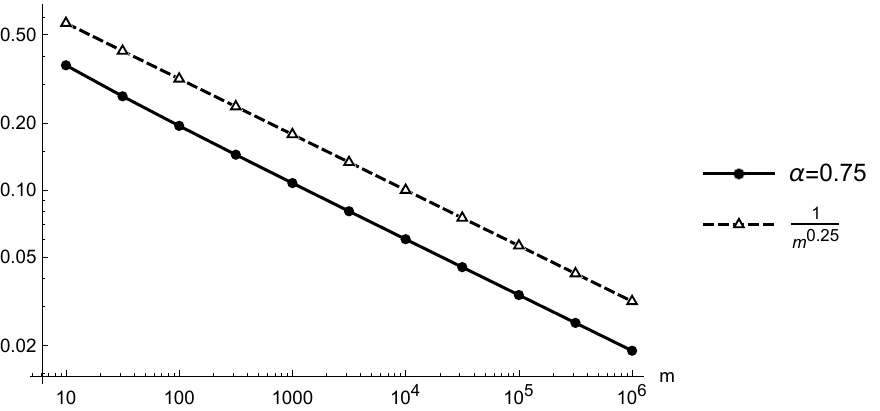}
	\caption{The loglog plot of (\ref{eqn:fError}) for increasing $m$ and $\alpha=0.75$ (solid, circles) and the reference line (dashed, triangles).}
	\label{fig:fIntAsym}
\end{figure}

We can also verify the main relation of this paper, that is the discrete composition (\ref{eqn:Composition}). As an example we choose three test functions: a polynomial $t^3$, $\sin t$, and a non-smooth function $|t-1/2|$. We compute the residue
\begin{equation}
\label{eqn:Residue}
	\rho = \left| J^\alpha \delta^\alpha y_n - y_n+y_0\right|,
\end{equation}
with the final time $t=1$, $\alpha=0.5$, and a decreasing sequence of steps $h = 1/n$. Results of calculations are presented in Fig. \ref{fig:Residue}. Immediately we can see that now, the convergence to zero is not monotone. Rather, for the majority of chosen steps $h$ the results of computations cluster along or parallel to the reference line $h^{0.5}$. This confirms the predicted order of the remainder $r_n$ in (\ref{eqn:Remainder}). As can also be seen, the results are not sensitive to the chosen test function even if it does not have a continuous derivative. Interestingly, the residue for $h=10^{-i}$ for $i=1,2,3,...$ is smaller than in the other cases. In each case, however, our estimate of the remainder (\ref{eqn:Remainder}) is confirmed. 

\begin{figure}
	\centering
	\includegraphics[scale = 1]{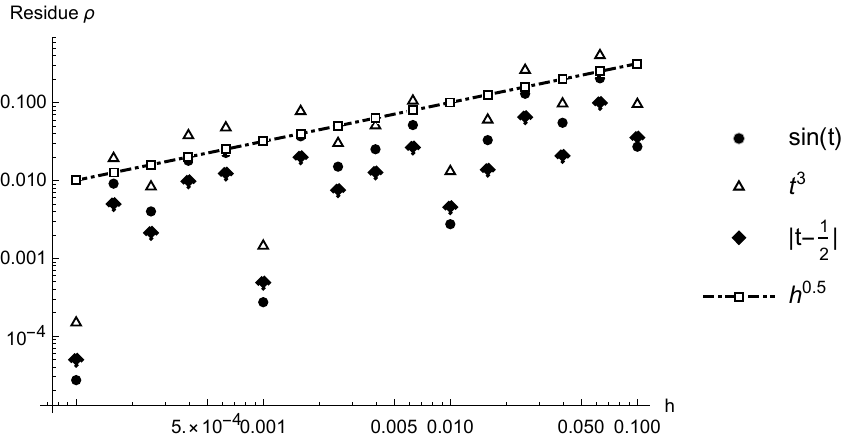}
	\caption{The loglog plot of (\ref{eqn:Residue}) as a function of $h=1/n$ with a fixed value of $\alpha=0.5$ for different test functions. }
	\label{fig:Residue}
\end{figure}

\section*{Acknowledgement}
Ł.P. has been supported by the National Science Centre, Poland (NCN) under the grant Sonata Bis with a number NCN 2020/38/E/ST1/00153.

\bibliography{biblio}

\begin{thebibliography}{10}

\bibitem{Die02}
Kai Diethelm and Neville~J Ford.
\newblock Analysis of fractional differential equations.
\newblock {\em Journal of Mathematical Analysis and Applications},
  265(2):229--248, 2002.

\bibitem{Fer12}
Rui~AC Ferreira.
\newblock A discrete fractional {G}r\"onwall inequality.
\newblock {\em Proceedings of the American Mathematical Society}, pages
  1605--1612, 2012.

\bibitem{Kop20}
Natalia Kopteva and Xiangyun Meng.
\newblock Error analysis for a fractional-derivative parabolic problem on
  quasi-graded meshes using barrier functions.
\newblock {\em SIAM Journal on Numerical Analysis}, 58(2):1217--1238, 2020.

\bibitem{Lam01}
Vito Lampret.
\newblock The {E}uler--{M}aclaurin and {T}aylor formulas: twin, elementary
  derivations.
\newblock {\em Mathematics Magazine}, 74(2):109--122, 2001.

\bibitem{Li19}
Changpin Li and Min Cai.
\newblock {\em Theory and numerical approximations of fractional integrals and
  derivatives}.
\newblock SIAM, 2019.

\bibitem{Li19a}
Changpin Li and Fanhai Zeng.
\newblock {\em Numerical methods for fractional calculus}.
\newblock Chapman and Hall/CRC, 2019.

\bibitem{Lia19}
Hong-lin Liao, William McLean, and Jiwei Zhang.
\newblock A discrete {G}r\"onwall inequality with applications to numerical
  schemes for subdiffusion problems.
\newblock {\em SIAM Journal on Numerical Analysis}, 57(1):218--237, 2019.

\bibitem{Old74}
Keith Oldham and Jerome Spanier.
\newblock {\em The fractional calculus theory and applications of
  differentiation and integration to arbitrary order}.
\newblock Elsevier, 1974.

\bibitem{Plo19}
{\L}ukasz P{\l}ociniczak.
\newblock Numerical method for the time-fractional porous medium equation.
\newblock {\em SIAM Journal on Numerical Analysis}, 57(2):638--656, 2019.

\bibitem{Plo21a}
{\L}ukasz P{\l}ociniczak.
\newblock A linear {G}alerkin numerical method for a strongly nonlinear
  subdiffusion equation.
\newblock {\em arXiv preprint arXiv:2107.10057}, 2021.

\bibitem{Plo17}
{\L}ukasz P{\l}ociniczak and Hanna Okrasi{\'n}ska-P{\l}ociniczak.
\newblock Numerical method for {V}olterra equation with a power-type
  nonlinearity.
\newblock {\em Applied Mathematics and Computation}, 337:452--460, 2018.

\end{thebibliography}
\bibliographystyle{plain}
	
\end{document}